\tikzset{edgee/.style = {->,> = latex'}}
\newcolumntype{P}[1]{>{\centering\arraybackslash}p{#1}}
\newcolumntype{M}[1]{>{\centering\arraybackslash}m{#1}}
\newcommand{\dsim}{\overset{d}{\sim}}
\newcommand{\hsim}{\overset{h}{\sim}}
\begin{document}

\title{Pattern avoidance and dominating compositions}

\author{Krishna Menon}
\address{Department of Mathematics, Chennai Mathematical Institute, India 603103}
\email{krishnamenon@cmi.ac.in}
\author{Anurag Singh}
\address{Department of Mathematics, Chennai Mathematical Institute, India 603103}
\email{anuragsingh@cmi.ac.in}
\keywords{pattern avoidance, set partition, Wilf-equivalence, dominating equivalent compositions}
\subjclass{05A15, 05A18, 05A19}

\begin{abstract}
Jel\'{\i}nek, Mansour, and Shattuck 
studied Wilf-equivalence among pairs of patterns of the form $\{\sigma,\tau\}$ where $\sigma$ is a set partition of size $3$ with at least two blocks.
They obtained an upper bound for the number of Wilf-equivalence classes for such pairs.
We show that their upper bound is the exact number of equivalence classes, thus solving a problem posed by them.
\end{abstract}

\maketitle

\section{Introduction}

For any $n \geq 1$, a partition of $[n]=\{1,\ldots,n\}$ is a collection of disjoint nonempty subsets of $[n]$ whose union is $[n]$.
The most common method of representing a set partition given by $\{B_1,B_2,\ldots,B_k\}$ is to write the blocks as
\begin{equation*}
    B_1/B_2/\cdots/B_k.
\end{equation*}
The usual convention followed is to order the blocks such that
\begin{equation*}
    \operatorname{min}(B_1) < \operatorname{min}(B_2) < \cdots < \operatorname{min}(B_k).
\end{equation*}Sometimes it is also convenient, when it does not cause confusion, to write the elements of each block without braces or commas.
For example, the partition $\{\{1,3,4\},\{2,6\},\{5,7,8\}\}$ of $[8]$ is written as $\{1,3,4\}/\{2,6\}/\{5,7,8\}$ or $134/26/578$.

A partition $B_1/B_2/\cdots/B_k$ of $[n]$ such that $\operatorname{min}(B_1) < \operatorname{min}(B_2) < \cdots < \operatorname{min}(B_k)$ can be represented by the sequence $a_1a_2\cdots a_n$ where $a_i=j$ if $i \in B_j$.
This sequence is called the \textit{restricted growth function} associated to the partition. For example, the partition $134/26/578$ has corresponding sequence $12113233$. For more on restricted growth functions, the interested reader is referred to Sagan's paper \cite[Section 4]{sag}. Henceforth, we will represent partitions using restricted growth functions.
The following definition of pattern avoidance in set partitions was first introduced by Sagan \cite{sag}.

\begin{definition}
A partition $\sigma=\sigma_1\sigma_2\cdots \sigma_n$ \textit{contains} a partition (or pattern) $\pi=\pi_1\pi_2\cdots\pi_m$ if there exists a subsequence $1 \leq h(1) < h(2) < \cdots < h(m) \leq n$ such that for any $i,j \in [m]$, $\sigma_{h(i)}=\sigma_{h(j)}$ if and only if $\pi_i=\pi_j$ and $\sigma_{h(i)}<\sigma_{h(j)}$ if and only if $\pi_i<\pi_j$. If $\sigma$ does not contain $\pi$, we say that $\sigma$ \textit{avoids} the pattern $\pi$.
\end{definition}

The topic of pattern avoidance has been an active area of research in enumerative combinatorics, starting with Knuth's work on permutations \cite{knu}. The study of pattern avoidance in set partitions was initiated by Klazar \cite{kla2}. Since then, several different notions of pattern avoidance of set partitions have been studied (see, e.g., the work of Chen et al. \cite{cddsy}, Goyt \cite{goyt}, or Bloom and Elizalde \cite{bloeli}). 

Let $T$ be a set of patterns. We will denote the set of partitions of $[n]$ that avoid all the patterns of $T$ as $P_n(T)$ and the number of such partitions, i.e., $|P_n(T)|$ as $p_n(T)$.

\begin{definition}
Two sets of patterns $T$ and $R$ are said to be Wilf-equivalent, written as $T \sim R$, if $p_n(T)=p_n(R)$ for all $n \geq 1$.
\end{definition}

For example, $P_n(\{123\})$ is the set of partitions of $[n]$ with at most two blocks and $P_n(\{122\})$ is the set of partitions where any block not containing $1$ is a singleton.
Hence we get $\{123\} \sim \{122\}$ since $p_n(\{123\})=p_n(\{122\})=2^{n-1}$ for all $n \geq 1$.

In this article, our focus will be on pairs of patterns $\{\sigma,\tau\}$, where $\sigma$ is a pattern of size $3$ with at least $2$ blocks.
Such pairs are called $(3,k)$-pairs when the the size of $\tau$ is $k$. 
Jel\'{\i}nek, Mansour, and Shattuck \cite{jel} studied such pairs and obtained an upper bound for the number of Wilf-equivalence classes of $(3,k)$-pairs. 
This was done by describing various Wilf-equivalences among $(3,k)$-pairs.
They also showed that any other Wilf-equivalences, if they exist, are between $(3,k)$-pairs of the form $\{112,\tau\}$ and left open the following problem.

\begin{problem*}[{\cite[Problem 2.17]{jel}}]
Are there any more equivalences among the $(3,k)$-pairs of the
form $\{112,\tau\}$ other than those that we know about? Equivalently, are there any two distinct 2-free integer partitions that are $\dsim$-equivalent (see \cref{defd})?
\end{problem*}

The main aim of this article is to answer this question.
In \cref{sec2}, we describe the results obtained by Jel\'{\i}nek, Mansour, and Shattuck \cite{jel} , and show how the second question is equivalent to the first in the above problem.
In \cref{sec3}, we answer this equivalent question and hence prove that there are no other Wilf-equivalences among $(3,k)$-pairs other than the ones described by Jel\'{\i}nek, Mansour, and Shattuck in \cite{jel}.

\section{Wilf-equivalences among $(3,k)$-pairs}\label{sec2}

In this section we will state the Wilf-equivalences between $(3,k)$-pairs derived by Jel\'{\i}nek, Mansour, and Shattuck \cite{jel}.

Note that if we are studying $P_n(T)$ where $T=\{\tau_1,\tau_2,\ldots\}$, we can assume that $\tau_i$ avoids $\tau_j$ for all $i \neq j$.
For example, when avoiding a $(3,k)$-pair $\{121,\tau\}$, we can assume that $\tau$ avoids $121$.
It can be shown that if the number of blocks and size of each block is specified, there is a unique partition that avoids $121$.
Namely, if a partition has $m$ blocks such that the $i^{th}$ block has $a_i$ elements for all $i \in [m]$, the unique such partition avoiding $121$ is $1^{a_1}2^{a_2}\cdots m^{a_m}$.
Here, $k^a$ represents $a$ consecutive copies of $k$.

\begin{definition}
A \textit{composition} of a positive integer $n$ is a sequence $(a_1,a_2,\ldots,a_m)$ such that $a_1+\cdots+a_m=n$.
The set of compositions of $n$ is denoted as $C_n$.
\end{definition}

For example, $C_3=\{(1,1,1),(1,2),(2,1),(3)\}$.
The above discussion implies that $P_n(\{121\})$ is in bijection with $C_n$.
The partition avoiding $121$ associated to the composition $a=(a_1,\ldots,a_m)$ is
\begin{equation*}
    \tau_{121}(a)=1^{a_1}2^{a_2}\cdots m^{a_m}.
\end{equation*}
It can be shown that the partitions of the form $\tau_{121}(b)$ that contain $\tau_{121}(a)$ are those where $b$ has a subsequence, having same length as $a$, such that each term in the subsequence has value at least that of the corresponding term in $a$.

\begin{definition}
The composition $b=(b_1,\ldots,b_k)$ is said to \textit{dominate} the composition $a=(a_1,\ldots,a_m)$ if there exists a subsequence $1\leq i(1) < i(2) < \cdots < i(m) \leq k$ such that $b_{i(j)} \geq a_j$ for all $j \in [m]$.
Such a subsequence is called an \textit{occurrence} of $a$ in $b$.
For any positive integer $n$, $D_n(a)$ will denote the set of compositions of $n$ that dominate $a$.
\end{definition}

Hence, we get that $P_n(\{121,\tau_{121}(a)\})=\{\tau_{121}(b) \mid b \in C_n \setminus D_n(a)\}$.
We now descibe the Wilf-equivalences among $(3,k)$-pairs of the form $\{121,\tau_{121}(a)\}$.
From the description of $P_n(\{121,\tau_{121}(a)\})$, it is clear that $\{121,\tau_{121}(a)\} \sim \{121,\tau_{121}(a')\}$ if and only if $|D_n(a)|=|D_n(a')|$ for all $n \geq 1$.
Hence we make the following definition.

\begin{definition}\label{defd}
For two compositions $a$ and $a'$, we say $a$ and $a'$ are \textit{dominating equivalent}, written as $a \dsim a'$, if for all positive integers $n$, $|D_n(a)|=|D_n(a')|$.
\end{definition}

Therefore, $\{121,\tau_{121}(a)\} \sim \{121,\tau_{121}(a')\}$ if and only if $a \dsim a'$.
Using similar ideas it can also be shown that $P_n(\{112\})$ is in bijection with compositions of $n$, where to a composition $a=(a_1,\ldots,a_m)$ we associate the partition
\begin{equation*}
    \tau_{112}(a)=12\cdots(m-1)mm^{a_m-1}\cdots2^{a_2-1}1^{a_1-1}.
\end{equation*}
Also, $P_n(\{112,\tau_{112}(a)\}) = \{\tau_{112}(b) \mid b \in C_n \setminus D_n(a)\}$ and $\{112,\tau_{112}(a)\} \sim \{112,\tau_{112}(a')\}$ if and only if $a \dsim a'$.

Using that fact that $p_k(\sigma)=2^{k-1}$ for all patterns $\sigma$ of size $3$ other than $111$ (see \cite[Theorem 4.3]{sag} for details), we get that if $k<k'$ then there are $2^{k-1}$ partitions of size $k$ avoiding a $(3,k')$-pair but only $2^{k-1}-1$ partitions of size $k$ avoiding a $(3,k)$-pair.
Hence, no $(3,k)$-pair can be Wilf-equivalent to a $(3,k')$-pair where $k \neq k'$.

A detailed discussion of the above facts can be found in the paper of Jel\'{\i}nek, Mansour, and Shattuck \cite{jel}.
In fact, they also showed that all except one Wilf-equivalence class of $(3,k)$-pairs correspond to dominating equivalence classes of $C_k$.

\begin{enumerate}
\setlength{\itemsep}{0.2cm}
    \item The Wilf-equivalence class corresponding to the dominating equivalence class $E$ containing $(1,\ldots,1)$ consists of the following pairs:
    \begin{enumerate}
    \setlength{\itemsep}{0.15cm}
        \item $\{121,\tau_{121}(a)\}$ where $a \in E$,
        \item $\{112,\tau_{112}(a)\}$ where $a \in E$,
        \item $\{122,\tau\}$ where $\tau \in P_k(\{122\})$, and
        \item $\{123,\tau'\}$ where $\tau' \neq 1^k$.
    \end{enumerate}
    \item The Wilf-equivalence class corresponding to a dominating equivalence class $E$ not containing $(1,\ldots,1)$ consists of the following pairs:
    \begin{enumerate}
    \setlength{\itemsep}{0.15cm}
        \item $\{121,\tau_{121}(a)\}$ where $a \in E$, and
        \item $\{112,\tau_{112}(a)\}$ where $a \in E$.
    \end{enumerate}
    \item The pair $\{123,1^k\}$ is not Wilf-equivalent to any other $(3,k)$-pair.
\end{enumerate}

Hence, if we denote the number of dominating equivalence classes in $C_k$ as $\xi_k$, we get that $(3,k)$-pairs split up into $1+\xi_k$ Wilf-equivalence classes.
Hence, we now shift our focus to the dominating equivalence among compositions.

\section{Dominating equivalence}\label{sec3}

We first recall a few results from the paper of
Jel\'{\i}nek, Mansour, and Shattuck \cite{jel}.

\begin{lemma}[{\cite[Lemma 2.3]{jel}}]\label{lemshuf}
If $a=(a_1,\ldots,a_m)$ and $a'=(a_{\sigma(1)},\ldots,a_{\sigma(m)})$ where $\sigma$ is a permutation of $[m]$, then $a \dsim a'$.
\end{lemma}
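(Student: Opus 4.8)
The plan is to prove $a \dsim a'$ by counting the \emph{complementary} compositions, i.e.\ those of $[n]$ that do \emph{not} dominate the pattern, rather than the dominators directly. Since $|D_n(a)| = |C_n| - |C_n \setminus D_n(a)| = 2^{n-1} - |C_n \setminus D_n(a)|$, it suffices to show that the number of compositions of $n$ avoiding $a$ depends only on the multiset of parts $\{a_1,\ldots,a_m\}$, and in particular is left unchanged by the rearrangement $\sigma$.

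First I would record that whether $b=(b_1,\ldots,b_k)$ dominates $a$ is decided by a deterministic greedy left-to-right scan: keeping track of the length $j$ of the longest prefix $(a_1,\ldots,a_j)$ matched so far, a part $b_i$ advances $j$ to $j+1$ exactly when $b_i \geq a_{j+1}$ and otherwise leaves $j$ fixed. A standard exchange argument shows leftmost-greedy matching is optimal, so $b$ dominates $a$ if and only if this automaton reaches $j=m$; hence compositions of $n$ avoiding $a$ correspond bijectively to runs that never leave the states $\{0,1,\ldots,m-1\}$.

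Next I would convert the automaton into a generating function $F(x)=\sum_{n\geq 0}|C_n\setminus D_n(a)|\,x^n$. At state $j$ a part contributes either a ``stay'' weight $\sum_{c=1}^{a_{j+1}-1}x^c$ or an ``advance'' weight $\sum_{c\geq a_{j+1}}x^c = x^{a_{j+1}}/(1-x)$; summing the geometric series for arbitrarily many stays at each state and then stopping at some state $j\leq m-1$ yields
\[
  F(x)=\sum_{j=0}^{m-1} h(a_{j+1})\prod_{i=1}^{j} g(a_i),
  \qquad g(v)=\frac{x^{v}}{1-2x+x^{v}},\quad h(v)=\frac{1-x}{1-2x+x^{v}}.
\]
The decisive algebraic step is the identity $h(v)=\tfrac{1-x}{1-2x}\bigl(1-g(v)\bigr)$, which makes the sum telescope: with $P_j=\prod_{i=1}^{j}g(a_i)$ each summand equals $\tfrac{1-x}{1-2x}(P_j-P_{j+1})$, so that
\[
  F(x)=\frac{1-x}{1-2x}\left(1-\prod_{i=1}^{m}\frac{x^{a_i}}{1-2x+x^{a_i}}\right).
\]
This closed form is manifestly symmetric in $a_1,\ldots,a_m$, so $[x^n]F$, and therefore $|D_n(a)|=2^{n-1}-[x^n]F$, depends only on the multiset of parts; applying this to $a$ and its rearrangement $a'$ gives $a\dsim a'$ at once, for \emph{every} permutation $\sigma$.

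I expect the main obstacle to be the first step rather than the algebra: one must justify carefully that leftmost-greedy matching genuinely decides domination (including the boundary case $a_{j+1}=1$, where ``stay'' is impossible and $h(1)=1$), and that the passage from automaton runs to $F$ is a clean bijection on compositions. Once the formula for $F$ is established the telescoping collapse and the resulting symmetry are routine. As a consistency check, reversing $b$ already gives a direct bijection proving the special case $a\dsim\operatorname{rev}(a)$.
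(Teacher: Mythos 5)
The paper does not actually prove this lemma; it imports it verbatim from Jel\'{\i}nek, Mansour, and Shattuck, so there is no in-paper argument to match yours against. Judged on its own, your proof is correct and self-contained. The greedy left-to-right matching is indeed valid (if $b$ has any occurrence of $a$, an easy induction shows the greedy match positions are componentwise no larger, so greedy succeeds), and the passage to the transfer-matrix generating function is the standard sequence decomposition: a run ending in state $j$ contributes $h(a_{j+1})\prod_{i=1}^{j}g(a_i)$ with $g(v)=x^{v}/(1-2x+x^{v})$ and $h(v)=(1-x)/(1-2x+x^{v})$, exactly as you wrote. The key identity $1-g(v)=(1-2x)/(1-2x+x^{v})$, hence $h(v)=\tfrac{1-x}{1-2x}\bigl(1-g(v)\bigr)$, is correct, the telescoping gives
\[
F(x)=\frac{1-x}{1-2x}\left(1-\prod_{i=1}^{m}\frac{x^{a_i}}{1-2x+x^{a_i}}\right),
\]
and this is manifestly symmetric in the parts, which yields the lemma for an arbitrary permutation $\sigma$ (I verified the formula on small cases such as $(1,2)$ versus $(2,1)$). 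Your approach in fact proves slightly more than the stated lemma, since it produces an explicit closed form for $|C_n\setminus D_n(a)|$ depending only on the multiset of parts; the only cosmetic quibble is that $|C_n|=2^{n-1}$ holds for $n\geq 1$ (not $n=0$), which does not affect the argument.
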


\begin{lemma}[{\cite[Lemma 2.4]{jel}}]\label{lemcomb11}
If $a=(a_1,\ldots,a_{m-1},2)$ and $a'=(a_1,\ldots,a_{m-1},1,1)$, then $a \dsim a'$.
\end{lemma}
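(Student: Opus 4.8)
The plan is to exhibit, for each $n$, an explicit bijection between $D_n(a)$ and $D_n(a')$ that leaves untouched a canonical prefix witnessing domination of the common part $\hat a=(a_1,\ldots,a_{m-1})$ and modifies only the remaining suffix. The starting observation is that both $a$ and $a'$ contain $\hat a$ (delete their last one or two entries), so every composition dominating $a$ or $a'$ must already dominate $\hat a$; hence $D_n(a)$ and $D_n(a')$ are both subsets of $\{b\in C_n : b \text{ dominates } \hat a\}$. For such a $b=(b_1,\ldots,b_k)$ I would let $p=p(b)$ be the least index with $(b_1,\ldots,b_p)$ dominating $\hat a$, and write $b=\alpha\gamma$ as the concatenation of the canonical prefix $\alpha=(b_1,\ldots,b_p)$ and the (possibly empty) suffix $\gamma=(b_{p+1},\ldots,b_k)$.

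The next step is to characterize membership in the two sets purely in terms of $\gamma$. Since every part is at least $1$, the two trailing $1$'s of $a'$ impose no size constraint, while the trailing $2$ of $a$ does. Using that $\alpha$ already contains an occurrence of $\hat a$ ending at or before position $p$, I would verify that $b\in D_n(a)$ if and only if $\gamma$ has a part of value at least $2$, and that $b\in D_n(a')$ if and only if $\gamma$ has at least two parts. The forward implications hold because the last one or two indices of any occurrence must lie beyond position $p$ (the earlier indices already force domination of $\hat a$ inside the prefix, so $p$ does not exceed the index preceding them); the reverse implications follow by extending the occurrence guaranteed in $\alpha$ using the relevant part or pair of parts of $\gamma$.

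The combinatorial heart is then a suffix-level count: for each $t\ge 0$, the compositions of $t$ with a part $\ge 2$ are precisely all compositions of $t$ except $(1^t)$, while the compositions of $t$ with at least two parts are precisely all compositions of $t$ except $(t)$. Both families therefore have the same cardinality (namely $2^{t-1}-1$ for $t\ge 1$, and $0$ for $t=0$), and one may biject them by the identity away from these exceptional compositions together with the swap $(t)\leftrightarrow(1^t)$. Applying this bijection to $\gamma$, with $t=n-|\alpha|$, while keeping $\alpha$ fixed yields the desired map $\Phi\colon D_n(a)\to D_n(a')$. The point I expect to require the most care is showing that $\Phi$ is well defined and invertible: the canonical prefix $p(b)$ depends only on $b_1,\ldots,b_p$ and on the failure of $(b_1,\ldots,b_{p-1})$ to dominate $\hat a$, so it is unchanged when the suffix is replaced. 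This stability guarantees that the decomposition $b=\alpha\gamma$ is recovered identically on the image, making $\Phi$ a bijection and hence proving $|D_n(a)|=|D_n(a')|$ for all $n$, that is, $a\dsim a'$.
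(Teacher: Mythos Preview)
The paper does not supply its own proof of this lemma: it is quoted verbatim from \cite[Lemma~2.4]{jel} and used as a black box, so there is no in-paper argument to compare against.

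Your argument is correct. The decomposition $b=\alpha\gamma$ at the minimal prefix dominating $\hat a=(a_1,\ldots,a_{m-1})$ cleanly separates the problem, and your two characterizations are right: if $i(1)<\cdots<i(m)$ witnesses $b$ dominating $a$, then $i(1),\ldots,i(m-1)$ already witnesses $\hat a$, forcing $p\le i(m-1)$ and hence $b_{i(m)}\ge 2$ lies in $\gamma$; the analogous argument for $a'$ puts both $i(m)$ and $i(m+1)$ into $\gamma$. The suffix count is immediate, and the key stability point---that $p(b)$ is determined solely by whether each initial segment dominates $\hat a$, hence is unaffected by replacing $\gamma$---is exactly what makes $\Phi$ a bijection. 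One small edge case worth a sentence: when $m=1$ the prefix $\hat a$ is empty, so one should set $p=0$ and $\gamma=b$; the characterizations then read ``$b$ has a part $\ge 2$'' versus ``$b$ has at least two parts'', and the argument goes through unchanged.
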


These lemmas imply that to each composition we can associate a unique $2$-free integer partition which is dominating equivalent to it.
This is done by first rearranging the terms in decreasing order and then replacing each $2$ in the composition by two $1$s.

\begin{example}
The composition $(3,2,5,1,1,3)$ is converted to the associated unique $2$-free integer partition as follows:
\begin{equation*}
    (3,2,5,1,1,3) \dsim (5,3,3,2,1,1) \dsim (5,3,3,1,1,1,1).
\end{equation*}
\end{example}

Hence, any two compositions that correspond to the same $2$-free integer partition are dominating equivalent.
We will now show that the converse is true as well, which was conjectured by Jel\'{\i}nek, Mansour, and Shattuck \cite{jel}.

\begin{theorem}\label{thm2free}
Two compositions $a$ and $a'$ are dominating equivalent if and only if they correspond to the same $2$-free integer partition.
\end{theorem}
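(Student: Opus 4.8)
The plan is to reduce dominating equivalence to an identity of generating functions and then show this generating function remembers the multiset of parts of a $2$-free partition. Since \cref{lemshuf} and \cref{lemcomb11} already give the ``if'' direction (every composition is $\dsim$-equivalent to the $2$-free partition obtained by sorting its parts and splitting each $2$ into two $1$s), it suffices to prove the ``only if'' direction, namely that distinct $2$-free partitions are not dominating equivalent. By \cref{defd}, writing $F_a(x)=\sum_{n\ge1}|D_n(a)|\,x^n$, we have $a\dsim a'$ if and only if $F_a=F_{a'}$, so the goal becomes: $F_\lambda$ determines $\lambda$ among $2$-free partitions.

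First I would compute $F_a$ in closed form. Reading a composition $b$ from left to right and matching $a=(a_1,\dots,a_m)$ greedily (advance past $a_j$ at the first still-available part of size $\ge a_j$) gives a deterministic procedure that succeeds exactly when $b$ dominates $a$. Translating this into a weighted automaton whose states record how much of $a$ has been matched, where a part of size $s$ carries weight $x^s$, yields
\[ F_a(x)=\frac{1-x}{1-2x}\prod_{j=1}^{m}\frac{x^{a_j}}{1-2x+x^{a_j}}, \]
after simplifying each ``stay/advance'' block built from $\big(\sum_{s=1}^{a_j-1}x^s\big)$ and $\tfrac{x^{a_j}}{1-x}$. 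As a sanity check, the part-$2$ factor $\tfrac{x^2}{1-2x+x^2}=\big(\tfrac{x}{1-x}\big)^2$ is the square of the part-$1$ factor, recovering \cref{lemcomb11}, and the product is symmetric in the $a_j$, recovering \cref{lemshuf}.

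The heart of the argument is the factorization $1-2x+x^s=(x-1)\,r_s(x)$ with $r_s(x)=\big(\sum_{j=1}^{s-1}x^j\big)-1$, which lets me write, for a $2$-free partition with $m_s$ parts equal to $s$,
\[ F_a(x)=\frac{\pm\,x^{|a|}}{(1-2x)\,(x-1)^{m-1}\prod_{s\ge 3}r_s(x)^{m_s}}. \]
I would then establish three properties of the $r_s$: (i) $r_s$ is strictly increasing on $(0,\infty)$ with $r_s(1/2)<0<r_s(1)$ for $s\ge3$, hence has a \emph{simple} real root $\rho_s\in(1/2,1)$; (ii) $r_s(1)=s-2$, so $x=1$ is a root of $r_s$ only when $s=2$; and (iii) for $s\ne t$ the polynomials $r_s,r_t$ are coprime. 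Property (ii) is exactly where the ``$2$-free'' hypothesis is essential: a part equal to $2$ would contribute an extra factor $(x-1)$, merging with the pole at $x=1$ and making two $1$s indistinguishable from one $2$. Granting (i)--(iii), the pole of $F_a$ at $x=1$ has order exactly $m-1$ (recovering the number of parts $m$), while the pole at the simple root $\rho_s$ has order exactly $m_s$ for each $s\ge3$, and finally $m_1=m-\sum_{s\ge3}m_s$. Thus $F_a$ determines every $m_s$, i.e.\ the $2$-free partition, which completes the ``only if'' direction.

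I expect the main obstacle to be property (iii), the pairwise coprimality of the family $\{r_s\}$. The clean route I see is: if $s>t$ then $r_s-r_t=x^t\big(1+x+\cdots+x^{s-t-1}\big)$, so any common root is a root of unity; but a root of unity $\zeta\ne1$ satisfies $r_s(\zeta)=0$ only if $\zeta^s=2\zeta-1$, which forces $|2\zeta-1|=1$ and hence $\zeta=1$, a contradiction. This coprimality, together with the simplicity of $\rho_s$, is precisely what makes the pole orders at the various $\rho_s$ read off the multiplicities $m_s$ independently, so it is the crux of the whole argument.
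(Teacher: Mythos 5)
Your proof is correct, but it takes a genuinely different route from the paper. The paper argues combinatorially by induction on the size $n$: it first recovers the number of parts $m$ from $|D_{n+1}(a)|\in\{2m,2m+1\}$, then introduces ``constructive pairs'' to show that if the smallest parts of $a$ and $b$ differ then $|D_{n+a_m}(a)|<|D_{n+a_m}(b)|$ (the key point being that all parts of $b$ exceed $a_m$, which forces uniqueness of the occurrence of $b$ and hence a bijection with constructive pairs, while $a$ admits a collision), and finally strips off the common last part using \cref{lem3} to close the induction. You instead compute the ordinary generating function $F_a(x)=\sum_n|D_n(a)|x^n$ in closed form via the greedy leftmost matching (which I checked: the transfer-matrix computation and the resulting product formula are right, and they do recover \cref{lemshuf} and \cref{lemcomb11} as symmetry and the identity $\tfrac{x^2}{(1-x)^2}=(\tfrac{x}{1-x})^2$), and then read the multiset of parts off the poles: the order of the pole at $x=1$ gives $m$, and for each $s\ge 3$ the order of the pole at the unique root $\rho_s\in(1/2,1)$ of $r_s(x)=x+\cdots+x^{s-1}-1$ gives $m_s$. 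Your verification of the needed root properties is sound --- in particular the coprimality argument via $r_s-r_t=x^t(1+\cdots+x^{s-t-1})$ and $|2\zeta-1|=1\Rightarrow\zeta=1$ on the unit circle is correct (and one could even get by with less, since monotonicity of each $r_s$ on $(0,\infty)$ already forces $\rho_3>\rho_4>\cdots$ to be distinct). Your approach buys an explicit rational expression for $F_a$, from which asymptotics and the exact role of the $2$-free condition ($r_2=x-1$ merging with the pole at $1$) are transparent; the paper's approach stays elementary and bijective and avoids any appeal to roots of polynomials, at the cost of the somewhat delicate constructive-pair analysis.
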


The above theorem implies that $\xi_k$, i.e., the number of dominating equivalence classes in $C_k$, is $p(k)-p(k-2)$, where for any $n \geq 1$, $p(n)$ is the number of integer partitions of $n$.
The sequence $(\xi_k)_{k\geq 0}$ is listed in the OEIS \cite{oeis} as \href{http://oeis.org/A027336}{A027336}.
Hence the number of Wilf-equivalences classes of $(3,k)$-pairs is $p(k)-p(k-2)+1$.

To prove the theorem, we first need a few lemmas and definitions.

\begin{lemma}\label{lem1}
If $a \dsim a'$ for two compositions $a$ and $a'$ of $m$ and $n$ respectively, then $m=n$.
\end{lemma}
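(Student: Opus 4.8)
The plan is to recover the sum $m$ of a composition $a$ purely from the sequence of counts $(|D_N(a)|)_{N \geq 1}$, so that dominating equivalence forces the two sums to coincide. The key observation is that domination can only increase total weight: if a composition $b=(b_1,\ldots,b_k)$ dominates $a=(a_1,\ldots,a_r)$ via an occurrence $i(1) < i(2) < \cdots < i(r)$, then since all parts are positive,
\[
  \sum_{i=1}^{k} b_i \;\geq\; \sum_{j=1}^{r} b_{i(j)} \;\geq\; \sum_{j=1}^{r} a_j \;=\; m.
\]
Hence every composition that dominates $a$ is a composition of some integer $N \geq m$.

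First I would conclude from this inequality that $D_N(a)=\emptyset$ whenever $N<m$. Next I would note that $a$ itself dominates $a$ (take the occurrence $i(j)=j$ for each $j$), and $a$ is a composition of $m$, so $a \in D_m(a)$ and in particular $D_m(a) \neq \emptyset$. Combining these two facts, $m$ is exactly the smallest positive integer $N$ for which $|D_N(a)|>0$; that is,
\[
  m = \min\{\, N \geq 1 : |D_N(a)|>0 \,\}.
\]

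Finally, since $a \dsim a'$ means $|D_N(a)|=|D_N(a')|$ for every $N \geq 1$, the two sequences of counts vanish on exactly the same indices and first become positive at the same place. Applying the displayed characterization to both $a$ and $a'$ then yields
\[
  m = \min\{\, N : |D_N(a)|>0 \,\} = \min\{\, N : |D_N(a')|>0 \,\} = n,
\]
as required. There is no genuine obstacle in this argument: everything rests on the monotonicity of total weight under domination, and the only point warranting care is the verification that $a$ actually lies in $D_m(a)$, so that the minimum is attained precisely at $m$ rather than at some larger value.
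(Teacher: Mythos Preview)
Your argument is correct and is essentially the paper's own proof: both hinge on the observation that $D_N(a)=\emptyset$ for $N<m$ while $D_m(a)\ni a$, so the first nonzero value of $|D_N(\cdot)|$ pins down the size of the underlying composition. The paper simply phrases this as a contradiction (taking $m<n$ and noting $|D_m(a)|=1$ but $|D_m(a')|=0$), whereas you give the equivalent direct formulation via $\min\{N:|D_N(a)|>0\}$.
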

\begin{proof}
Suppose to the contrary that $m$ and $n$ are distinct.
Without loss of generality, we assume $m<n$.
Then $D_m(a)=\{a\}$ whereas $D_m(a')$ is an empty set.
This contradicts the fact that $a \dsim a'$ and hence proves our lemma.
\end{proof}

\begin{definition}
The composition $b=(b_1,\ldots,b_k)$ is said to \textit{h-dominate} the composition $a$ if $b$ dominates $a$ and either $k=1$ or $(b_1,\ldots,b_{k-1})$ does not dominate $a$.
For any positive integer $n$, $D^h_n(a)$ is the set of compositions of $n$ that h-dominate $a$.
We say $a$ and $a'$ are \textit{h-dominating equivalent}, written as $a \hsim a'$, if for all positive integers $n$, $|D_n^h(a)|=|D_n^h(a')|$.
\end{definition}

We will now show that dominating equivalence is the same as h-dominating equivalence.
Before doing so, we set up some notations.
Let $c=(c_1,\ldots,c_k)$ and $c'=(c'_1,\ldots,c'_{k'})$ be compositions.
We will denote the composition $(c_1,\ldots,c_k,c'_1,\ldots,c'_{k'})$ by $(c,c')$.
Similarly, for any positive integer $s$, denote the composition $(c_1,\ldots,c_k,s)$ by $(c,s)$.

\begin{lemma}\label{lem2}
For any two compositions $a$ and $a'$, $a \dsim a'$ if and only if $a \hsim a'$.
\end{lemma}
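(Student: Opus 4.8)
The plan is to relate the "dominating" counts to the "h-dominating" counts via a clean combinatorial identity, and then argue that the two equivalence relations agree because the generating data carry the same information. First I would observe that h-domination refines domination: every composition $b=(b_1,\ldots,b_k)$ that dominates $a$ has a unique shortest prefix $(b_1,\ldots,b_\ell)$ that still dominates $a$, and this prefix is precisely the composition that h-dominates $a$. Thus a dominating composition of $n$ is determined by an h-dominating composition $(b_1,\ldots,b_\ell)$ of some integer $s \le n$, together with an arbitrary "tail" $(b_{\ell+1},\ldots,b_k)$ which is just an arbitrary composition of $n-s$ (including the empty tail when $s=n$).

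The key step is therefore to write down the resulting convolution. Letting $q_s$ denote the number of compositions of $s$ and $|D_n^h(a)|$ the h-dominating count, the refinement above yields, for every $n$,
\begin{equation*}
    |D_n(a)| = \sum_{s=1}^{n} |D_s^h(a)| \cdot q_{n-s},
\end{equation*}
where $q_0=1$ (the empty tail) and $q_{n-s}=2^{n-s-1}$ for $n-s\geq 1$. The crucial point is that the kernel $q_{n-s}$ depends only on $n-s$ and not on $a$, so this is a fixed linear transformation, the same for every composition.

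From here the equivalence of the two relations follows from invertibility of this transformation. If $a \hsim a'$, then $|D_s^h(a)|=|D_s^h(a')|$ for all $s$, and the displayed convolution immediately gives $|D_n(a)|=|D_n(a')|$ for all $n$, so $a\dsim a'$. Conversely, if $a\dsim a'$, I would show the system can be solved for the h-dominating counts: the transformation $(|D_s^h(a)|)_s \mapsto (|D_n(a)|)_n$ is lower triangular with $1$'s on the diagonal (the $s=n$ term contributes $|D_n^h(a)|\cdot q_0=|D_n^h(a)|$), hence invertible over $\Z$, and the inverse is again independent of $a$. Therefore $|D_n(a)|=|D_n(a')|$ for all $n$ forces $|D_s^h(a)|=|D_s^h(a')|$ for all $s$, giving $a\hsim a'$.

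The main obstacle is justifying the refinement bijection rigorously, namely that truncating a dominating composition at its unique minimal dominating prefix, and separately recording the tail as an arbitrary composition, is a genuine bijection onto pairs (h-dominating prefix of size $s$, arbitrary composition of $n-s$). One must check that the minimal prefix is well-defined (it exists because $b$ itself dominates $a$, and it is unique because the dominating property is monotone under extending on the right), that it indeed h-dominates $a$ by the definition given, and that any tail can be freely appended without destroying domination. I would also handle the boundary case $s=n$ with the convention $q_0=1$ carefully, since there the empty tail corresponds to $b$ itself being h-dominating. Once these points are verified the algebra is routine.
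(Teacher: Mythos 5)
Your proof is correct and follows essentially the same route as the paper: the same decomposition of $D_n(a)$ by the minimal dominating prefix, yielding the convolution identity $|D_n(a)| = |D_n^h(a)| + \sum_{i=1}^{n-1}|D_i^h(a)|\cdot|C_{n-i}|$, with the forward direction immediate and the converse obtained by inverting the triangular system (which the paper phrases as an induction on $n$). No substantive differences.
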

\begin{proof}
Note that for any composition $a$,
\begin{equation}\label{eq1}
    D_n(a) = D_n^h(a)\ \sqcup\ \bigsqcup_{i=1}^{n-1} \{(c,c') \mid c \in D_i^h(a),\ c' \in C_{n-i}\}.
\end{equation}
This is obtained by finding, for each $b=(b_1,\ldots,b_k) \in D_n(a)$, the least $j \in [k]$ such that $(b_1,\ldots,b_j)$ dominates $a$.
This element $b$ of $D_n(a)$ belongs to exactly one of the $n$ sets on the right side.
If $j=k$, then $b \in D_n^h(a)$ or else $b \in \{(c,c') \mid c \in D_i^h(a),\ c' \in C_{n-i}\}$ where $i=b_1+\cdots+b_j$.

Suppose $a \hsim a'$.
The equality \eqref{eq1} implies that for all positive integers $n$,
\begin{align*}
    |D_n(a)| &= |D_n^h(a)|\ +\ \sum_{i=1}^{n-1} |D_i^h(a)|\cdot|C_{n-i}|\\
    &= |D_n^h(a')|\ +\ \sum_{i=1}^{n-1} |D_i^h(a')|\cdot|C_{n-i}|\\
    &= |D_n(a')|.
\end{align*}
Hence we get $a \dsim a'$.

Conversely, suppose $a \dsim a'$.
We will prove $|D_n^h(a)|=|D_n^h(a')|$ for all $n \geq 1$ by induction on $n$.
We have $|D_1^h(a)|=|D_1(a)|=|D_1(a')|=|D^h_1(a')|$.
Let $n \geq 2$ and $|D_k^h(a)| = |D_k^h(a')|$ for all $k<n$.
Using \eqref{eq1}, we get
\begin{align*}
    |D_n^h(a)| &= |D_n(a)|\ -\ \sum_{i=1}^{n-1} |D_i^h(a)|\cdot|C_{n-i}|\\
    &= |D_n(a')|\ -\ \sum_{i=1}^{n-1} |D_i^h(a')|\cdot|C_{n-i}|\\
    &= |D_n^h(a')|.
\end{align*}
Hence we get $a \hsim a'$, which completes the proof of the lemma.
\end{proof}

\begin{lemma}\label{lem3}
If $a$ and $a'$ are two compositions and $s$ is a positive integer such that $(a,s) \hsim (a',s)$, then we have $a \hsim a'$.
\end{lemma}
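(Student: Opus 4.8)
The plan is to reduce the statement to ordinary dominating equivalence via \cref{lem2}, by expressing $|D_n^h((a,s))|$ in terms of the quantities $|D_j(a)|$ and $|D_j((a,s))|$. The starting point is a structural description of which compositions h-dominate $(a,s)$. Writing a composition $b$ of $n$ as $b=(b',t)$, where $t$ is the last part, I would first observe that $b$ dominates $(a,s)$ if and only if either $b'$ already dominates $(a,s)$, or $t\geq s$ and $b'$ dominates $a$. Indeed, in any occurrence of $(a,s)$ in $b$ that uses the last position, that position is the largest one used, so it must be matched to the final entry $s$ of the pattern; the remaining entries $a_1,\dots,a_m$ are then matched inside $b'$. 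Conversely, appending the last part of value $t\geq s$ to an occurrence of $a$ in $b'$ produces an occurrence of $(a,s)$.

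From this decomposition the h-domination condition becomes transparent. Since $b=(b',t)$ h-dominates $(a,s)$ exactly when $b$ dominates $(a,s)$ but $b'$ does not, the first alternative above is excluded, and I obtain: $b$ h-dominates $(a,s)$ if and only if $t\geq s$ and $b'$ dominates $a$ but not $(a,s)$ (this also correctly excludes the degenerate case $b'=()$, so no one-part composition h-dominates $(a,s)$). Because dominating $(a,s)$ forces dominating $a$, we have $D_j((a,s))\subseteq D_j(a)$, so the number of compositions of $j$ that dominate $a$ but not $(a,s)$ is exactly $|D_j(a)|-|D_j((a,s))|$. Summing over the possible last parts $t\geq s$, equivalently over $j=n-t$ ranging from $1$ to $n-s$, then yields the key identity
\begin{equation*}
    |D_n^h((a,s))| = \sum_{j=1}^{n-s}\bigl(|D_j(a)| - |D_j((a,s))|\bigr),
\end{equation*}
valid for all $n$, and similarly for $a'$.

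To finish, suppose $(a,s)\hsim(a',s)$. By \cref{lem2} this also gives $(a,s)\dsim(a',s)$, that is, $|D_j((a,s))|=|D_j((a',s))|$ for all $j$. Substituting the identity above into the equalities $|D_n^h((a,s))|=|D_n^h((a',s))|$, the terms involving $(a,s)$ and $(a',s)$ cancel, leaving $\sum_{j=1}^{n-s}|D_j(a)|=\sum_{j=1}^{n-s}|D_j(a')|$ for every $n$. Taking successive differences in $n$ forces $|D_j(a)|=|D_j(a')|$ for all $j$, i.e.\ $a\dsim a'$, and one more application of \cref{lem2} gives $a\hsim a'$.

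The main obstacle is pinning down the structural characterization in the first two paragraphs, especially the ``but not $(a,s)$'' clause: one must argue carefully that the last part of $b$ is forced to play the role of $s$ in any occurrence that uses it, and handle the empty-$b'$ boundary case so that it is consistent with the counting formula. Once the identity is established, the cancellation using \cref{lem2} and the differencing argument are entirely routine.
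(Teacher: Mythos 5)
Your proof is correct, but it takes a genuinely different route from the paper's. You decompose a composition $b=(b',t)$ h-dominating $(a,s)$ by its \emph{last part} $t$, observing that this happens precisely when $t\geq s$ and $b'$ dominates $a$ but not $(a,s)$; this yields the closed identity $|D_n^h((a,s))|=\sum_{j=1}^{n-s}\bigl(|D_j(a)|-|D_j((a,s))|\bigr)$, after which \cref{lem2} lets the $(a,s)$-terms cancel and a telescoping difference in $n$ recovers $|D_j(a)|=|D_j(a')|$ directly. The paper instead splits each $b\in D_n^h((a,s))$ at the \emph{least prefix} dominating $a$, writing $b=(c,c')$ with $c\in D_i^h(a)$ and $c'$ in an auxiliary family $C_{n-i,s}$ (compositions whose non-final parts are $<s$ and whose final part is $\geq s$); since $|C_{i,s}|=0$ for $i<s$ and $|C_{s,s}|=1$, the resulting convolution is triangular and an induction on $n$ extracts $|D_n^h(a)|=|D_n^h(a')|$, with \cref{lem1} and \cref{lem2} handling the base case. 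Your version avoids both the auxiliary sets $C_{k,s}$ and the induction, at the price of leaving the h-domination framework (your identity mixes $D^h$ with ordinary $D$) and invoking \cref{lem2} twice; the paper's version stays entirely in terms of $D^h$ and is the natural companion to the decomposition \eqref{eq1} used to prove \cref{lem2}. The one point you rightly flag --- that the last position of $b$ must play the role of $s$ in any occurrence not contained in $b'$, and that the empty-$b'$ case contributes nothing --- is handled correctly, so there is no gap.
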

\begin{proof}
Let $a$ be a composition and $s$ be a positive integer.
Let $C_{k,s}$ denote those compositions of $k$ of the form $(c_1,\ldots,c_m)$ where $c_i<s$ for all $i \in [m-1]$ and $c_m \geq s$.
Note that
\begin{equation}\label{eq2}
    D^h_n((a,s)) = \bigsqcup_{i=1}^{n-1}\{(c,c') \mid c \in D^h_i(a),\ c' \in C_{n-i,s}\}.
\end{equation}
This is obtained by finding, for each $b=(b_1,\ldots,b_m) \in D^h_n((a,s))$, the least $j \in [m]$ such that $(b_1,\ldots,b_j)$ dominates $a$.
Since $b \in D^h_n((a,s))$, this would mean that $b_{j+1},\ldots,b_{m-1}$ are all strictly less than $s$ and $b_m \geq s$.
Note that $|C_{i,s}|=0$ for all $i < s$ and that $|C_{s,s}|=1$.
Hence an implication of the equality \eqref{eq2} is that for any positive integer $n$,
\begin{equation}\label{eq3}
    |D_{n+s}^h((a,s))| = |D_n^h(a)| + \sum_{i=s+1}^{n+s-1}|D_{n+s-i}^h(a)| \cdot |C_{i,s}|.
\end{equation}
Let $a'$ be a composition such that $(a,s) \hsim (a',s)$.
We will prove $|D_n^h(a)|=|D_n^h(a')|$ for all $n \geq 1$ by induction on $n$.
If $|D_1^h(a)| \neq |D_1^h(a')|$, we would have exactly one of $a$ or $a'$ being the composition of $1$.
This would mean $(a,s)$ and $(a',s)$ are compositions of different numbers, which contradicts \cref{lem1} (since \cref{lem2} implies that $(a,s) \dsim (a',s)$).
Let $n \geq 2$ and $|D_k^h(a)| = |D_k^h(a')|$ for all $k<n$.
From \eqref{eq3}, we get
\begin{align*}
    |D_n^h(a)| &= |D_{n+s}^h((a,s))| - \sum_{i=s+1}^{n+s-1}|D_{n+s-i}^h(a)| \cdot |C_{i,s}|\\
    &= |D_{n+s}^h((a',s))| - \sum_{i=s+1}^{n+s-1}|D_{n+s-i}^h(a')| \cdot |C_{i,s}|\\
    &= |D_n^h(a')|.
\end{align*}
Hence we get $a \hsim a'$, which completes the proof of the lemma.
\end{proof}

\begin{corollary}\label{cor}
If $a$ and $a'$ are two compositions and $s$ is a positive integer such that $(a,s) \dsim (a',s)$, then we have $a \dsim a'$.
\end{corollary}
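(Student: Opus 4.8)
The plan is to deduce this directly from \cref{lem2} and \cref{lem3}, which together reduce the statement about $\dsim$ to the corresponding statement about $\hsim$ that has already been established. The key observation is that \cref{lem2} lets me pass freely between dominating equivalence and h-dominating equivalence for any pair of compositions, while \cref{lem3} supplies the cancellation property I actually need---but only in the h-dominating setting.

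First I would take the hypothesis $(a,s) \dsim (a',s)$ and apply \cref{lem2} to the two compositions $(a,s)$ and $(a',s)$, obtaining $(a,s) \hsim (a',s)$. Next I would feed this into \cref{lem3}, whose conclusion is exactly $a \hsim a'$. Finally I would invoke \cref{lem2} once more, this time for the compositions $a$ and $a'$, to translate back and conclude $a \dsim a'$. The whole argument is thus a three-step chain $\dsim \,\to\, \hsim \,\to\, \hsim \,\to\, \dsim$, with \cref{lem3} doing the one genuine step of cancellation in the middle.

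There is no real obstacle at this stage: all of the combinatorial substance lives in the recursive identities \eqref{eq1} and \eqref{eq2} that drive the proofs of \cref{lem2} and \cref{lem3}, and the corollary merely repackages those results in the form most convenient for later use. The only point worth flagging is that \cref{lem3} is phrased in terms of $\hsim$ rather than $\dsim$, so the two applications of \cref{lem2} bracketing it are genuinely needed---one cannot jump straight from $(a,s) \dsim (a',s)$ to $a \dsim a'$ without routing the argument through h-dominating equivalence.
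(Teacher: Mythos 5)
Your argument is correct and is exactly what the paper does: the paper's proof is simply ``Combine \cref{lem2} and \cref{lem3},'' which unpacks to the same three-step chain $\dsim \to \hsim \to \hsim \to \dsim$ you describe. No issues.
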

\begin{proof}
Combine \cref{lem2} and \cref{lem3}.
\end{proof}

We can now prove our main theorem.

\begin{proof}[Proof of \cref{thm2free}]
We have to show that if $a$ and $b$ are two compositions such that $a \dsim b$, then they correspond to the same $2$-free integer partition.
By \cref{lem1}, such $a$ and $b$ are compositions of the same number $n$.
We will prove the theorem by induction on $n$, the case $n=1$ being trivial.
Let $n \geq 2$ and suppose the statement is true for all numbers less than $n$.
Suppose $a \dsim b$ are two compositions of $n$.
Using \cref{lemshuf} and \cref{lemcomb11}, we can assume that $a_1 \geq \cdots \geq a_m$ and $a_{m-1} \geq 2$ and similarly that $b_1 \geq \cdots \geq b_k$ and $b_{k-1} \geq 2$ (this can be done by reordering the terms in decreasing order and then changing any pair of $1$'s to a $2$).

We will now compute $|D_{n+1}(a)|$.
If $a_m \geq 2$, we get $|D_{n+1}(a)|=2m+1$.
This is because any composition in $D_{n+1}(a)$ is obtained by either
\begin{enumerate}
    \item inserting the term $1$ before $a_1$, between $a_i$ and $a_{i+1}$ for some $i \in [m-1]$, or after $a_m$, or
    \item adding $1$ to some term of $a$.
\end{enumerate}
Since all terms of $a$ are greater than $1$ in this case, each method of obtaining a composition of $D_{n+1}(a)$ described above results in a different composition.

When $a_m=1$, using the same logic as above but noting that adding $1$ before $a_m$ and adding $1$ after $a_m$ both result in the same composition, we get $|D_{n+1}(a)|=2m$.

Using similar arguments for $b$ and since we must have $|D_{n+1}(a)|=|D_{n+1}(b)|$, we get $k=m$, i.e., $a$ and $b$ have the same number of terms.

Note that if we show $a_m=b_m$, \cref{lem3} and the induction hypothesis would imply that $a$ and $b$ have the same corresponding $2$-free integer partition.
On the contrary, let $a_m \neq b_m$.
Without loss of generality we can assume that $a_m < b_m$.
We will show that this implies that $|D_{n+a_m}(a)| < |D_{n+a_m}(b)|$, which is a contradiction to $a \dsim b$.

A \textit{constructive pair} is a pair $(c,P)$ where $c=(c_1,\ldots,c_l)$ is a composition of $a_m$ and $P$ is a sequence that consists of the numbers $1,2,\ldots,l$ in order and $m$ boxes such that each box contains at most one of the numbers.
To each such pair $(c,P)$ we associate the composition in $D_{n+a_m}(b)$ obtained by replacing each unboxed number $i$ in the sequence $P$ by $c_i$ and the $j^{th}$ box by $b_j$ if it is empty and by $b_j+c_i$ if it contains the number $i$.
We will call this composition $b(c,P)$.

\begin{example}\label{egconst}
Suppose $b=(8,6,6,5)$, $a_m=4$, $c=(1,2,1)$, and $P=1\ \framebox{\color{white}2}\ \framebox{2}\ 3\ \framebox{\color{white}2}\ \framebox{\color{white}2}$.
Then the composition $b(c,P)$ is $(1,8,8,1,6,5)$.

The construction of $b(c,P)$ from $b$ be thought of visually as follows:
Think of any composition as towers of boxes.
The boxes in the sequence $P$ can be thought of as the top view of $b$ and the number $i$ represents where to add $c_i$ boxes to $b$.
The construction in the example is shown in \Cref{fig:my_label}.
\end{example}

\begin{figure}[H]
    \centering
    \begin{tikzpicture}[scale=0.5]
        \draw (0,0) rectangle (0+1,0+1);
        \draw (0,1) rectangle (0+1,1+1);
        \draw (0,2) rectangle (0+1,2+1);
        \draw (0,3) rectangle (0+1,3+1);
        \draw (0,4) rectangle (0+1,4+1);
        \draw (0,5) rectangle (0+1,5+1);
        \draw (0,6) rectangle (0+1,6+1);
        \draw (0,7) rectangle (0+1,7+1);
        
        \draw (1,0) rectangle (1+1,0+1);
        \draw (1,1) rectangle (1+1,1+1);
        \draw (1,2) rectangle (1+1,2+1);
        \draw (1,3) rectangle (1+1,3+1);
        \draw (1,4) rectangle (1+1,4+1);
        \draw (1,5) rectangle (1+1,5+1);
        
        \draw (2,0) rectangle (2+1,0+1);
        \draw (2,1) rectangle (2+1,1+1);
        \draw (2,2) rectangle (2+1,2+1);
        \draw (2,3) rectangle (2+1,3+1);
        \draw (2,4) rectangle (2+1,4+1);
        \draw (2,5) rectangle (2+1,5+1);
        
        \draw (3,0) rectangle (3+1,0+1);
        \draw (3,1) rectangle (3+1,1+1);
        \draw (3,2) rectangle (3+1,2+1);
        \draw (3,3) rectangle (3+1,3+1);
        \draw (3,4) rectangle (3+1,4+1);
        \node at (6,3) {$\xrightarrow{(c,P)}$};
        \node at (2,-1) {$\textcolor{white}{(P}b\textcolor{white}{P)}$};
    \end{tikzpicture}
    \begin{tikzpicture}[scale=0.5]
        \draw[blue] (-1,0) rectangle (-1+1,0+1);
        \node at (-0.5,0.5) {\color{blue}$c_1$};
        \draw (0,0) rectangle (0+1,0+1);
        \draw (0,1) rectangle (0+1,1+1);
        \draw (0,2) rectangle (0+1,2+1);
        \draw (0,3) rectangle (0+1,3+1);
        \draw (0,4) rectangle (0+1,4+1);
        \draw (0,5) rectangle (0+1,5+1);
        \draw (0,6) rectangle (0+1,6+1);
        \draw (0,7) rectangle (0+1,7+1);
        
        \draw (1,0) rectangle (1+1,0+1);
        \draw (1,1) rectangle (1+1,1+1);
        \draw (1,2) rectangle (1+1,2+1);
        \draw (1,3) rectangle (1+1,3+1);
        \draw (1,4) rectangle (1+1,4+1);
        \draw (1,5) rectangle (1+1,5+1);
        \draw[blue] (1,6) rectangle (1+1,6+2);
        \node at (1.5,7) {\color{blue}$c_2$};
        
        \draw[blue] (2,0) rectangle (2+1,0+1);
        \node at (2.5,0.5) {\color{blue}$c_3$};
        \draw (2+1,0) rectangle (2+1+1,0+1);
        \draw (2+1,1) rectangle (2+1+1,1+1);
        \draw (2+1,2) rectangle (2+1+1,2+1);
        \draw (2+1,3) rectangle (2+1+1,3+1);
        \draw (2+1,4) rectangle (2+1+1,4+1);
        \draw (2+1,5) rectangle (2+1+1,5+1);
        
        \draw (3+1,0) rectangle (3+1+1,0+1);
        \draw (3+1,1) rectangle (3+1+1,1+1);
        \draw (3+1,2) rectangle (3+1+1,2+1);
        \draw (3+1,3) rectangle (3+1+1,3+1);
        \draw (3+1,4) rectangle (3+1+1,4+1);
        \node at (3,-1) {$b(c,P)$};
        \end{tikzpicture}
    \caption{Construction corresponding to the constructive pair given in \cref{egconst}}
    \label{fig:my_label}
\end{figure}
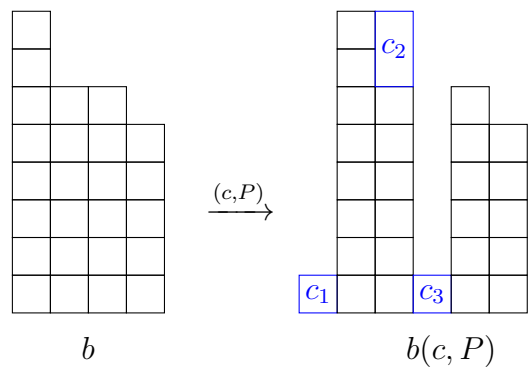



Using any occurrence of $b$ in a composition $b'$ in $D_{n+a_m}(b)$, we can obtain a constructive pair $(c,P)$ such that $b(c,P)=b'$.
This is done as follows:
Suppose $b'=(b'_1,\ldots,b'_p)$ is a composition in $D_{n+a_m}(b)$ and $1 \leq i(1) < i(2) < \cdots < i(m) \leq p$ is an occurrence of $b$.
Construct $P$ from $b'$ by first replacing $b'_j$ by a ball if $j \notin \{i(1),\ldots,i(m)\}$.
Then for any $j \in [m]$, replace $b'_{i(j)}$ by an empty box if $b'_{i(j)}=b_j$ and by a box containing a ball if $b'_{i(j)}>b_j$.
Suppose there are $l$ balls in $P$, we replace them by the numbers $1,\ldots,l$ in order.
Now for each $i \in [l]$,
\begin{enumerate}
    \item set $c_i$ as $b'_j$ if $i$ is an unboxed number and the $j^{th}$ term in $P$, and
    \item set $c_i$ as $b'_{i(j)}-b_j$ if $i$ is a boxed number and the $i(j)^{th}$ term of $P$.
\end{enumerate}
The above procedure might be more clear when compositions are thought of as towers of boxes.
For example, in \Cref{fig:my_label}, if the black boxes in $b(c,P)$ are used as the occurrence of $b$ in the above procedure, we obtain the constructive pair $(c,P)$ of \cref{egconst}.

It is clear that if $b'=b(c,P)$ for some constructive pair $(c,P)$, then there is a canonical occurrence of $b$ in $b'$ formed using the original copy of $b$ to which terms were added in the construction.
Using this canonical occurrence in the above procedure we can get back the original constructive pair $(c,P)$.
However, since $b$ has all terms strictly greater than $a_m$, there is a unique occurrence of $b$ in any composition $b'$ in $D_{n+a_m}(b)$.
Hence, there is a unique constructive pair $(c,P)$ such that $b(c,P)=b'$ for any composition $b'$ of $D_{n+a_m}(b)$.
Therefore, the set $D_{n+a_m}(b)$ is in bijection with the set of constructive pairs.

Since $a$ also has $m$ terms, the same method can be used to associate a composition in $D_{n+a_m}(a)$ to a constructive pair $(c,P)$, and we call this composition $a(c,P)$.
Just as for $b$, any composition $a' \in D_{n+a_m}(a)$ can be obtained from $a$ using a constructive pair.
However, $a((a_m),P_1)=a((a_m),P_2)$ where $P_1$ has $1$ between the $(m-1)^{th}$ and $m^{th}$ box and $P_2$ has $1$ after the $m^{th}$ box.
Hence we get $|D_{n+a_m}(a)| < |D_{n+a_m}(b)|$, which completes the proof.
\end{proof}

\bibliographystyle{abbrv}
\bibliography{refs}

\end{document}